\newtheorem{thm}{Theorem}
\newtheorem{lemma}{Lemma}
\newtheorem{remark}{Remark}
\newcommand{\Q}{{\mathbb Q}}
\newcommand{\Z}{{\mathbb Z}}
\newcommand{\Ha}{{\mathbb H}}
\newcommand{\Oo}{{\mathcal O}}
\newcommand{\afr}{\mathfrak{a}}
\newcommand{\eps}{\varepsilon}
\newcommand{\al}{\alpha}
\newcommand{\Addresses}{{
  \bigskip
  \footnotesize

  Sonia Samol, \textsc{Institut f\"ur Mathematik, Johannes Gutenberg Universit\"at Mainz,
    Germany}\par\nopagebreak
  \textit{E-mail address} \texttt{samols@uni-mainz.de}

}}
\begin{document}
\title{Effective bounds for the negativity of Shimura curves on Hilbert Modular Surfaces }
\author{Sonia Samol}
\date{}
\maketitle
\begin{abstract} We study the bounded negativity conjecture for non-quaternionic Hilbert modular surfaces and give an explicit bound for the special case of Hirzebruch-Zagier curves on Hilbert modular surfaces.\end{abstract}

\section*{Introduction}
The bounded negativity conjecture states that for each smooth complex projective surface $X$ there exists a number $b(X)\geq 0$ such that $C^2\geq - b(X)$ for every reduced, irreducible curve $C \subset X$.\\
It is unclear where the origins of it conjecture lay, but it is believed to have been already mentioned by Federigo Enriques to his last student, Alfredo Franchetta. \\

In recent years, there has been made a lot of progress on verifying the conjecture in special cases. For this paper, the most important result was proven in 2011: In [1] it was shown that for example for Shimura curves on compact Hilbert modular surfaces the conjecture is true:\\

\begin{thm} (Bauer, Harbourne, Knutsen, K\"uronya, M\"uller-Stach, Roulleau, Szemberg).\\
For a Shimura curve $C$, not necessarily smooth, on a quaternionic Shimura surface of Hilbert modular type $X$ the inequality
\begin{equation*}
 C^2\geq -6c_2(X)
\end{equation*}
holds, where $c_2(X)$ is the second Chern class of $X$. Moreover, there is only a finite number of Shimura curves with $C^2<0$.\end{thm}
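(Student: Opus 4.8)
The plan is to pass to the normalisation $\nu\colon \tilde C \to C \subset X$ and to exploit that $\tilde C$ is itself a compact arithmetic quotient $\Gamma_C\backslash \Ha$, where $\Gamma_C$ is the Fuchsian group cut out by the quaternionic datum defining $C$, and that the immersion $\tilde C \to X = \Gamma\backslash(\Ha\times\Ha)$ is totally geodesic. The starting point is the adjunction formula on the (smooth model of the) surface, $C^2 = 2p_a(C) - 2 - K_X\cdot C$, together with $p_a(C) = g(\tilde C) + \delta$, where $\delta \geq 0$ collects the local $\delta$-invariants of the singular self-intersection points of the immersed curve. Lower-bounding $C^2$ therefore amounts to controlling $K_X\cdot C$ from above and the genus-plus-singularity term from below.

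Next I would compute $K_X\cdot C$ by Hirzebruch proportionality. On $X$ the Chern form of the canonical bundle is $c_1(K_X) = \tfrac1{2\pi}(\omega_1+\omega_2)$, the sum of the two invariant K\"ahler forms, and $c_2(X)$ is proportional to the covolume $\operatorname{vol}(X)$ by the same principle. Pulling $c_1(K_X)$ back along the totally geodesic immersion expresses $K_X\cdot C$ as a fixed multiple of the hyperbolic area $\operatorname{vol}(\tilde C)$, hence, via Gauss--Bonnet, as a linear function of $2g(\tilde C)-2$ corrected by the contributions $\sum_i(1-1/e_i)$ of the elliptic points of $\Gamma_C$. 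Substituting into adjunction rewrites $C^2$ as $2\delta$ minus an explicit linear expression in $\operatorname{vol}(\tilde C)$.

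The decisive step, and the one I expect to be the main obstacle, is that this linear expression is negative and grows with the volume, so the naive estimate $\delta\ge 0$ is not enough: one must show that a Shimura curve of large volume acquires enough self-intersections that $2\delta$ outweighs it, forcing $C^2>0$ once $\operatorname{vol}(\tilde C)$ exceeds a bound depending only on $X$. I would establish this by a counting/equidistribution argument for the self-intersections of the immersed geodesic curve, or equivalently by invoking the known asymptotics of the self-intersection numbers (the Hirzebruch--Zagier type formulas), to prove that the defect $2\delta$ dominates. This both yields the uniform lower bound and shows that only curves of bounded volume can be negative.

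Finally, finiteness follows from arithmetic rigidity: by the Siegel--Takeuchi lower bound on the covolume of arithmetic Fuchsian groups, there are only finitely many Shimura curves on the fixed surface $X$ whose volume lies below the threshold obtained above, and only these can satisfy $C^2<0$. Tracking the proportionality constants through the Gauss--Bonnet and adjunction computations, and bounding the orbifold corrections $\sum_i(1-1/e_i)$ uniformly against $\operatorname{vol}(X)$, is what produces the explicit constant $6$ in the inequality $C^2\ge -6c_2(X)$; pinning down this constant cleanly is the most delicate part of the bookkeeping.
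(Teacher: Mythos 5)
Your first two steps (adjunction on the immersed curve, Hirzebruch proportionality identifying $K_X\cdot C$ with a fixed multiple of the hyperbolic area of $\tilde C$) are consistent with the structure of the actual argument, but the decisive third step is a genuine gap — and you have located it yourself. After Gauss--Bonnet and adjunction one is left with $C^2 = 2\delta - (2g(\tilde C)-2)$, where the subtracted term grows linearly in the volume of the curve, so $\delta\ge 0$ gives nothing. At that point you appeal to ``a counting/equidistribution argument for the self-intersections'' to show that $2\delta$ dominates; no such uniform lower bound on the singularities of a totally geodesic curve of large volume is supplied, and producing one directly is essentially as hard as the theorem itself. The Hirzebruch--Zagier asymptotics you invoke as a fallback only cover the modular curves $T_N$ on non-compact Hilbert modular surfaces, not arbitrary Shimura curves on a compact quaternionic surface, so they cannot close the gap. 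The proof in \cite{SS_7} (reproduced in Section 2 of this paper for the non-compact case) replaces this step by a completely different external input: Miyaoka's orbifold Bogomolov--Miyaoka--Yau inequality (Theorem~\ref{alphathm}), which for an irreducible curve of geometric genus $g$ asserts
\[
\frac{\alpha^2}{2}(C^2+3C\cdot K_X-6g+6)-2\alpha(C\cdot K_X-3g+3)+3c_2-K_X^2\geq 0\qquad\text{for all }\alpha\in[0,1].
\]
Combining this with the Hirzebruch--H\"ofer proportionality $K_X\cdot C+2C^2=4\delta$ to eliminate $g$ and $K_X\cdot C$ yields a quadratic $P(\alpha)\ge 0$; evaluating it at the minimizing $\alpha_0$ gives $C^2\ge 2\delta-\mathrm{const}-2\sqrt{\cdots}$ and hence the explicit bound in terms of $c_2$. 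In particular the constant $6$ does not come from Gauss--Bonnet bookkeeping and orbifold corrections, as you suggest; it comes from optimizing Miyaoka's quadratic in $\alpha$.

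Your finiteness argument is in the right spirit: negative curves are forced to have bounded $\delta$, hence bounded volume, and there are only finitely many Shimura curves of bounded volume on a fixed surface. But this volume bound is again a by-product of the Miyaoka-plus-proportionality computation, not of an equidistribution statement, so it cannot be obtained independently of the missing step above.
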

This theorem gave rise to the problems we consider in this paper: Firstly, whether we can give an explicit bound for the negative self-intersection numbers of Hirzebruch-Zagier curves on Hilbert modular surfaces and secondly how the bound given in Theorem $1$ changes when we consider non-compact Hilbert modular surfaces.\\
In the first part of this paper we will look at those Hirzebruch-Zagier curves on Hilbert modular surfaces and give an effective bound for their negativity.\\
Let $p\equiv 1$ mod $4$ be a prime, $\Oo_K$ the ring of integers of $K=\Q(\sqrt{p})$. The group SL$_2(\Oo_K)$ acts proper discontinuously on $\Ha^2$ the product of the upper half plane with itself by
\[\left(\begin{matrix}\alpha & \beta \\  \gamma & \delta \end{matrix}\right)(z_1,z_2)\mapsto \left(\frac{\alpha z_1+\beta}{\gamma z_1 + \delta}, \frac{\alpha' z_2+ \beta'}{\gamma'z_2+\delta'} \right).\]For a given $N$ the Hirzebruch-Zagier curves are defined as all points $(z_1,z_2)\in \Ha^2$ satisfying the equation 
\begin{equation*}a\sqrt{p}z_1z_2+\lambda z_2-\lambda'z_1+b\sqrt{p}=0
\end{equation*} with $a,b \in \Z, \lambda \in \Oo, \lambda \lambda'+abp=N$.\\ In 1976, Friedrich Hirzebruch and Don Zagier proved in \cite{SS_HZ76} that the generating series for the intersection numbers of the
Hirzebruch-Zagier cycles $T_N$ is a classical modular form of weight 2 by giving an explicit formula for these intersection numbers:
\begin{thm} (Hirzebruch-Zagier \cite{SS_HZ76}, Theorem 4):
\begin{equation*}
 {T_N^2}= \frac{1}{2} \sum_{n|N}n H_p\left(\frac{N^2}{n^2}\right)\left(\chi_{p}(n)+\chi_{p}\left(\frac{NA}{n}\right)\right),
\end{equation*}

where $H_p(n)$ is some sum of class numbers of positive definite primitive binary integral quadratic forms.\end{thm}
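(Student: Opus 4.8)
The plan is to realise $T_N$ as a cycle on the smooth compactification $\widetilde{X}$ of the open Hilbert modular surface $\Ha^2/\mathrm{SL}_2(\Oo_K)$ obtained by adjoining the cusps and resolving the resulting quotient singularities, and then to split the computation of $T_N^2$ into a \emph{cross-intersection} part $T_N\cdot T_M$ (for $M\neq N$) and a genuine \emph{diagonal} part handled by adjunction. First I would decompose $T_N$ into irreducible components: each triple $(a,b,\lambda)$ with $\lambda\lambda'+abp=N$ cuts out in $\Ha^2$ an algebraic curve which is the image of an embedded copy of the upper half plane, so $T_N$ is a finite sum of such modular curves, taken modulo the $\mathrm{SL}_2(\Oo_K)$-action.

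For $M\neq N$ I would compute $T_M\cdot T_N$ by counting, with multiplicity, the transverse intersection points in $\Ha^2/\mathrm{SL}_2(\Oo_K)$. A point lies on two of these curves exactly when a pair of the defining equations holds simultaneously; eliminating $(z_1,z_2)$ reduces this to counting lattice points, equivalently primitive representations of integers by the relevant binary quadratic forms, subject to a norm condition. Collecting these counts into $\mathrm{SL}_2(\Oo_K)$-orbits produces precisely the Hurwitz--Kronecker class-number sums $H_p$, while the Legendre symbol factors $\chi_p(n)$ and $\chi_p(NA/n)$ emerge as the local densities recording how the relevant primes split in $K=\Q(\sqrt p)$.

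The self-intersection itself cannot be read off from a naive point count, so the next step is to work on $\widetilde X$ and apply adjunction, $T_N^2 = 2p_a(T_N)-2-K_{\widetilde X}\cdot T_N$, where $p_a$ is the arithmetic genus of the (possibly reducible) cycle. I would compute $K_{\widetilde X}\cdot T_N$ and the Euler characteristic of the normalisation of each component via Gauss--Bonnet applied to the hyperbolic area of the underlying modular curve, together with the contributions of the elliptic fixed points and of the intersections of $T_N$ with the cusp-resolution cycles. Summing these local contributions over the divisors $n\mid N$ and re-expressing the elliptic-point terms through the same class-number function $H_p$ is what yields the claimed symmetric expression.

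The hard part will be the bookkeeping at the boundary and at the elliptic fixed points: one must show that the corrections coming from the cusp cycles and from the quotient singularities reassemble exactly into the $\chi_p$-weighted class-number sum, rather than producing spurious terms. The cleanest way to control this, which I would ultimately adopt, is to package all the numbers $T_N\cdot T_M$, with the self-intersections appearing on the diagonal together with a suitable $T_0$ contribution, into the generating series $\sum_N (T_N\cdot T_M)\,q^N$, verify its invariance under the relevant weight-$2$ slash operators so that it is a modular form, and then identify it with the Eisenstein series whose Fourier coefficients are the stated class-number sums. The self-intersection formula is then the diagonal specialisation of that identity.
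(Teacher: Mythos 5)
The paper does not prove this statement: it is quoted (in abbreviated form) from Hirzebruch--Zagier \cite{SS_HZ76}, Theorem 4, and is used as a black box throughout Section 1, so there is no proof in the paper to compare yours against. Your outline does track the architecture of the original Hirzebruch--Zagier argument --- decomposition of $T_N$ into components $F_B$, cross-intersections $T_M\cdot T_N$ as lattice-point counts reassembling into class numbers, the self-intersection via adjunction together with Gauss--Bonnet and corrections at elliptic points and cusps, and the packaging into a weight-$2$ generating series.

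As a proof, however, it is only a road map, and the gaps sit exactly where the real work lies. First, on the non-compact surface $\Ha^2/\mathrm{SL}_2(\Oo_K)$ the number $T_N^2$ is not even defined until you fix a compactification and say how $T_N$ is modified at the boundary; Hirzebruch--Zagier must introduce a cuspidal correction and track intersections with the resolution cycles $S_k$, and the $I_p$ term in the full formula (recorded in the paper's Theorem 5 but suppressed in the statement you are proving) is precisely this boundary contribution --- your sketch names the bookkeeping but does not carry it out, and without it the stated formula is not what you would obtain. Second, your final step is circular: the modularity of $\sum_N (T_N\cdot T_M)q^N$ is the \emph{main theorem} of \cite{SS_HZ76}, and in the original paper it is \emph{deduced from} the explicit computation of all the intersection numbers, including the diagonal ones; you cannot invoke it to identify the diagonal coefficients. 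Third, none of the local computations is performed: the transversality and multiplicity of intersection points, the volume of each $F_B$ (via Shimizu/Prestel), and the elliptic-point contributions that produce the specific sum $\sum H\bigl(\frac{4n-x^2}{p}\bigr)$ over $x^2\equiv 4n \bmod p$ are exactly where the shape of $H_p$ comes from, and they are absent.
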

We use this formula to find a bound for the negativity of the Hirzebruch-Zagier curves. To that purpose we give estimates for the appearing terms with the aim of ending up with a polynomial of degree $2$ in $N$, the discriminant of the Hirzebruch-Zagier curves. We deal with this in lemmata $1$ to $3$ of this paper, then the determination of the minimum is an easy polynomial extremum calculation.\\ 
First, we restrict to the case $N=\prod_{i=1}^k p_i$ with $p_1,..,p_k$ pairwise different prime numbers with $\chi_p(p_i)=1$, because otherwise the self-intersection numbers will be more positive and hence not interesting for the bounded negativity conjecture. But then the sum simplifies to 
\begin{equation*}
 {T_N^2}=  \sum_{n|N}n H_p\left(\frac{N^2}{n^2}\right).
\end{equation*}
The sum can be split into a negative and a positive part where the negative part is just $-\frac{1}{6}\sigma_1(N)$ and can be measured by Robin's estimate for the divisor sum.\\
For the positive part we get a sum over some class numbers. The sum is defined over solutions to the equation \begin{equation*}x \in \mathbb{Z}, x^2 \leq 4n, x^2 \equiv 4n \textnormal{ mod } p. \end{equation*}
We approximate the solutions and insert Payley's inequality for the class numbers in Lemma $1$ to get an estimate for the self-intersection numbers.
The only remaining obstacle to detecting the minimum of the self-intersection numbers lies now in the logarithmic terms which occur both in Payley's estimate for the class numbers and in Robin's estimate for the divisor sum. But we solve this issue by replacing them with their biggest possible value in the intervall we are interested in.\\
 
Therefore we get an effective version of Theorem 1 for the Hirzebruch-Zagier curves $T_N$ on Hilbert modular surfaces, namely

\begin{thm}For $N=\prod_{i=1}^k p_i$ with $p_1,..,p_k$ pairwise different prime numbers with $\chi_p(p_i)=1$, we have for $p \rightarrow \infty$
\begin{align*}T_N^2 \geq -\left(\frac{1}{96}\frac{c^2}{\delta}\right)  p^{\frac{3}{2}},
\end{align*}
with $\delta:=\frac{\pi}{12\textnormal{e}^{\gamma}}$ and $c:=\textnormal{e}^{\gamma}+0.6482$ with $\gamma$ the Euler-Mascheroni constant.
\end{thm}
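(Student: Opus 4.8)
The plan is to start from the simplified intersection formula
\begin{equation*}
T_N^2 = \sum_{n\mid N} n\, H_p\!\left(\frac{N^2}{n^2}\right),
\end{equation*}
and split it into the two pieces advertised in the introduction. The author has already indicated that the ``diagonal'' or boundary contribution produces a clean negative term equal to $-\tfrac{1}{6}\sigma_1(N)$, while the remaining terms assemble into a sum of class numbers $H_p$ indexed by the integer solutions $x$ of $x^2\le 4n$ and $x^2\equiv 4n \pmod p$. So the first step is to fix this decomposition cleanly and to record both halves as explicit functions of $N$ and $p$, treating $N=\prod p_i$ as squarefree so that every divisor $n$ and the ratios $N^2/n^2$ are easy to enumerate.

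Next I would bound each half separately, with the goal of producing a quadratic polynomial in $N$ whose extremum can be read off. For the negative part I would invoke Robin's estimate for $\sigma_1(N)$, which controls $\sigma_1(N)$ by roughly $\mathrm{e}^{\gamma} N \log\log N$; this is exactly where the constant $\delta=\tfrac{\pi}{12\mathrm{e}^{\gamma}}$ and the correction $c=\mathrm{e}^{\gamma}+0.6482$ are going to enter, so the two logarithmic factors (from Robin and from Payley) must be tracked carefully. For the positive part I would first estimate the number of admissible $x$ in the congruence-plus-inequality constraint — this is an elementary counting argument once one fixes $n\mid N$ — and then substitute Payley's inequality $H_p(m)\ll \sqrt{m}\,\log m$ for each class-number term. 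Summing over $n\mid N$ should turn the positive part into something comparable in size to the negative part but of the correct sign, so that after combining, the whole expression is a lower bound of the shape $T_N^2 \ge -(\text{constant})\, f(N)$ with $f$ essentially a quadratic in $N$.

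The crux, as the author flags, is handling the logarithmic terms: both Robin's estimate and Payley's estimate carry a $\log$ or $\log\log$ factor, and these are precisely what prevent the bound from being a clean polynomial in $N$. The strategy I would follow is to replace each logarithm by its maximal value over the range of $N$ actually relevant to the minimization, i.e. to use that on the bounded interval where the self-intersection number can be negative, $\log N$ and $\log\log N$ are themselves bounded; this converts the transcendental bound into a genuine polynomial $P(N)$ of degree $2$ at the cost of enlarging the constant. The relation between $N$ and $p$ must be pinned down here, since the curves with $T_N^2<0$ occur only for $N$ in a window determined by $p$, and it is the worst-case $N$ in that window that will govern the final $p^{3/2}$ growth rate.

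Finally, with the quadratic $P(N)$ in hand, the minimization over $N$ is a routine single-variable calculus exercise: differentiate, solve for the optimal $N$, and substitute back. The optimal $N$ will scale like a power of $p$, and plugging it in should produce the claimed $p^{3/2}$ dependence with the leading constant $\tfrac{1}{96}\,c^2/\delta$. I expect the main obstacle to be bookkeeping rather than conceptual: the asymptotic regime $p\to\infty$ in the statement suggests that the error terms from replacing logarithms by their extremal values, from Payley's inequality, and from the counting of solutions $x$ must all be shown to be of lower order than $p^{3/2}$, so the real work lies in verifying that the combined bound is uniform in $p$ and that the subleading contributions genuinely vanish in the limit.
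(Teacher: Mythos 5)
Your outline follows the same route as the paper: the same decomposition $H_p(n)=-\tfrac{1}{6}n+H_p^0(n)$ giving the negative piece $-\tfrac{1}{6}\sigma_1(N)$ controlled by Robin, the positive piece controlled by counting admissible $x$ and a class-number estimate, the replacement of the logarithmic factors by their extremal values on the window where the minimum can occur, and a final one-variable minimization. However, as written the proposal has two genuine gaps. First, you quote the class-number input in the wrong direction: you write ``Payley's inequality $H_p(m)\ll\sqrt{m}\log m$,'' which is an \emph{upper} bound, but to bound $T_N^2$ from below you must bound the positive part from below, and the estimate actually used is the \emph{lower} bound $h(-d)\geq \frac{\pi}{24\mathrm{e}^{\gamma}}\frac{\sqrt{d}}{\log\log d}$ (Payley combined with Littlewood). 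An upper bound on the positive part yields nothing here, so this is not a notational quibble; the constant $\delta=\frac{\pi}{12\mathrm{e}^{\gamma}}$ in the final answer comes precisely from this lower bound.

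Second, the two quantitative steps that actually produce the exponent $\tfrac{3}{2}$ and the constant $\tfrac{1}{96}c^2/\delta$ are only gestured at. The counting of solutions must be made explicit: starting from one solution $x\leq\frac{p-1}{2}$ of $x^2\equiv 4n^2 \bmod p$, the shifts $x+kp$ for $0\leq k\leq\lfloor 2n/p\rfloor-1$ give at least $\lfloor 2n/p\rfloor$ admissible values, and summing $\sqrt{4n^2-(x+kp)^2}/\sqrt{p}$ over these (via $\sqrt{1-t}\geq 1-2t$) is what produces the term $\frac{2n}{3p}$, i.e.\ the quadratic-in-$N$ growth of the positive part; without this the positive part is not visibly quadratic and the extremum computation cannot be set up. Likewise, the window $N\leq p^{k}$ with $k\geq\frac{3}{2(1-3\eps)}$ must be derived (by comparing exponents of the leading positive term $\frac{2\delta N^2}{3p^{3/2}}$ against the negative term $\frac{c}{6}N\log\log N$) before you may legitimately replace $\log\log N$ by its maximum on that window; and the minimization then gives $N_{\min}\approx\frac{1}{8}\frac{c}{\delta}p^{3/2}$, whence the value $-\frac{1}{96}\frac{c^2}{\delta}p^{3/2}$ up to factors $p^{O(k\eps)}$ that tend to $1$ as $p\to\infty$. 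These computations are the substance of the proof, not bookkeeping, and the proposal does not carry any of them out.
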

In the second part we follow closely the approach in [1], using results by Miyaoka to generalize Theorem 1 for non-quaternionic Hilbert modular surfaces.
\begin{thm}
For a Shimura curve $C$ on a non-quaternionic Hilbert modular surface $X$ we have 
\begin{align*}C^2\geq -9d_2(X),\end{align*}
where $d_2(X):=3c_2(X)-K_X^2$.
\end{thm}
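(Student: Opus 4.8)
The plan is to transplant the proof of Theorem 1 from [1] to the non-compact situation, the one new ingredient being a replacement of the compact Chern number $c_2(X)$ by an invariant insensitive to the cusps. First I would fix a smooth model: starting from $\Gamma\backslash\Ha^2$ with $\Gamma\subset\mathrm{SL}_2(\Oo_K)$ non-cocompact, pass to the smooth compactification $X$ given by Hirzebruch's resolution of the finitely many cusps, with reduced cusp cycle $D\subset X$, a normal crossing divisor, and open part $X\setminus D=\Gamma\backslash\Ha^2$. The natural invariants are then the logarithmic Chern numbers $\overline{c}_1^2=(K_X+D)^2$ and $\overline{c}_2=e(X\setminus D)=c_2(X)-e(D)$; for Hilbert modular surfaces these satisfy the polydisk proportionality $\overline{c}_1^2=2\,\overline{c}_2$, and a short computation identifies the combination $d_2(X)=3c_2(X)-K_X^2$ with (a fixed multiple of) $\overline{c}_2$ plus boundary terms. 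Thus $d_2(X)$, which is Miyaoka's Bogomolov-Miyaoka-Yau defect and is nonnegative for $X$ of general type, is exactly the quantity that should play the role of $c_2(X)$ here.

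Next I would recall the mechanism behind Theorem 1 and adapt it. A Shimura curve $C$ is totally geodesic, so along its normalization $f\colon\tilde C\to X$ the conormal sequence exhibits a line subbundle of $f^*\Omega^1_X(\log D)$ whose degree equals $-C^2$ up to an explicit correction supported on $C\cap D$. This is where Miyaoka's results enter: the logarithmic cotangent bundle $\Omega^1_X(\log D)$ of a Hilbert modular surface is $\mu$-semistable with respect to the log-canonical polarization, so by the Bogomolov inequality the degree of any line subbundle is at most half the slope, which gives an estimate of the form $-C^2\le\tfrac12\,(K_X+D)\cdot C+(\text{cusp correction})$. The totally geodesic nature of $C$ is used to guarantee that this restriction is sharp and that the subbundle does not over-violate the slope bound. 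Combining this with adjunction $C\cdot(C+K_X)=2p_a(C)-2$ expresses everything in terms of $(K_X+D)\cdot C$, the genus of $C$, and $C\cdot D$.

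The final step, and the heart of the matter, is to make the bound uniform, i.e.\ independent of the genus of $C$. For a general curve $(K_X+D)\cdot C$ is unbounded, but for a Shimura curve its value is the hyperbolic volume of $C$, which the proportionality ties rigidly to the normal degree $C^2$. Feeding Miyaoka's semipositivity of $3c_2(X)-K_X^2$ (the logarithmic Bogomolov-Miyaoka-Yau inequality for $X\setminus D$) into this proportionality bounds $(K_X+D)\cdot C$ against $d_2(X)$, and collecting the numerical constants from the semistability estimate, adjunction and the proportionality converts the compact constant $6$ into $9$ and forces precisely the combination $d_2=3c_2-K_X^2$; the loss from $6$ to $9$ reflects that in the compact quaternionic case the extra relation $K_X^2=2c_2$ is available to sharpen the constant, whereas here only the logarithmic proportionality may be used.

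I expect the main obstacle to be concentrated entirely at the boundary and in this uniformity step. One must genuinely verify that Miyaoka's semistability and the log-BMY inequality apply to these non-compact Hilbert modular surfaces (log-general type, and the correct nef/bigness of $K_X+D$), that semistability of $\Omega^1_X(\log D)$ survives restriction to the single special curve $C$, and that a Shimura curve meeting or acquiring singularities along the cusp cycles $D$ can have its terms $C\cdot D$ and $p_a(C)-g(\tilde C)$ controlled and absorbed into $d_2(X)$. Showing that all of these boundary contributions combine into the clean statement $C^2\ge -9d_2(X)$ with the stated constant is the step I expect to require the most care.
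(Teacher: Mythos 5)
Your proposal names the right external inputs (Miyaoka, and a proportionality statement for totally geodesic curves corrected by boundary terms), but the mechanism you describe for combining them is not the one in the paper and, as written, would not produce the theorem. The paper does not argue via semistability of $\Omega^1_X(\log D)$ and a Bogomolov slope bound on a line subbundle along $C$. It quotes Miyaoka's orbibundle Miyaoka--Yau--Sakai inequality (Theorem 2.2 of [1]): for every irreducible curve $C$ of geometric genus $g$ on a surface of nonnegative Kodaira dimension,
\[
\tfrac{\alpha^2}{2}(C^2+3C\cdot K_X-6g+6)-2\alpha(C\cdot K_X-3g+3)+3c_2-K_X^2\geq 0 \quad\text{for all }\alpha\in[0,1].
\]
The essential feature is that this is a \emph{curve-sensitive} refinement of the Bogomolov--Miyaoka--Yau defect: the genus and intersection numbers of the individual curve are coupled to $d_2=3c_2-K_X^2$ inside one quadratic in $\alpha$. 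Setting $\delta=p_a(C)-g$, eliminating $K_X\cdot C$ via the Hirzebruch--H\"ofer proportionality $(K_X+S)\cdot C+2C^2+\rho(C)=4\delta$ (this is the only place the Shimura hypothesis enters, and where the boundary terms $S\cdot C$ and $\rho(C)$ appear), and minimizing $P(\alpha)$ over $\alpha$ gives
\[
C^2\geq 2\delta+S\cdot C+\rho(C)-2d_2-2\sqrt{d_2^2+\delta d_2+d_2\,S\cdot C+d_2\,\rho(C)},
\]
after which $x-2\sqrt{d_2x}\geq -d_2$ (applied to $x=S\cdot C$ and $x=\rho(C)$) and a case split at $\delta=\frac{5+\sqrt{13}}{2}d_2$ yield $C^2\geq\bigl(-4-2\sqrt{(7+\sqrt{13})/2}\bigr)d_2\geq -9d_2$.

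The concrete gap in your version is that the two tools you substitute for this --- a slope inequality of the shape $-C^2\leq\frac12(K_X+D)\cdot C+(\text{cusp correction})$ and the global non-negativity of $3c_2-K_X^2$ --- cannot close the argument. Feeding the proportionality $(K_X+S)\cdot C=4\delta-2C^2-\rho(C)$ into your slope inequality makes the $C^2$ terms cancel, leaving a relation among $\delta$, $\rho(C)$ and the cusp correction that says nothing about $C^2$; and the mere non-negativity of the single number $d_2$ carries no information about an individual curve, so it cannot ``bound $(K_X+D)\cdot C$ against $d_2(X)$'' as you assert. What is missing is precisely the quadratic-in-$\alpha$ inequality above, in which $d_2$ and the data of $C$ interact; the constant $9$ then comes from the explicit discriminant computation and from absorbing the two boundary terms at a cost of $d_2$ each, not from the loss of the compact relation $K_X^2=2c_2$.
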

For the special case of a Hilbert modular surface over $K=\mathbb{Q}(\sqrt{p})$ that we considered in the first part, a bound for the second Chern class can be calculated using \cite{{SS_GvdG88}}.

So by this approach we get the bound
\begin{align*}
C^2 &\geq -\left(\frac{9}{10}p^{\frac{3}{2}}\left(\frac{3}{2\pi^2}\log^2(p)+1.05\log p\right)+ \frac{27}{2}p^{\frac{1}{2}}\left(\frac{3}{2\pi^2}\log^2(p)+1.05\log p\right)\right.\\
&+\left.\frac{27 \pi}{8\textnormal{e}^{\gamma}}\frac{p^{\frac{1}{2}}}{\log\log(4p)}+\frac{3\cdot 5\sqrt{3}\pi}{2\textnormal{e}^{\gamma}}\frac{p^{\frac{1}{2}}}{\log\log(3p)}+\frac{3 \cdot \sqrt{3}\pi}{2\textnormal{e}^{\gamma}}\frac{p^{\frac{1}{2}}}{\log\log(3p)}\right).
\end{align*}

\section{A bound for Hirzebruch-Zagier curves on Hilbert modular surfaces}

First we give some notations and definitions.\\
Let $p \equiv 1$ mod $4$ be a prime, $K=\mathbb{Q}(\sqrt{p})$, $\mathcal{O}$ the ring of integers of $K$ and $\mathfrak{a}$ an ideal in $\mathcal{O}$
 with Norm($\mathfrak{a}$)$=A$.\\
The group SL$_2(\mathcal{O},\mathfrak{a})=\left\{ T \in \left(\begin{matrix} 
                                                     \mathcal{O} & \mathfrak{a}^{-1}\\
                                                     \mathfrak{a} & \mathcal{O}
                                                    \end{matrix}
\right), \det T =1 \right\}$ acts proper discontinuously on $\Ha^2$ the product of the upper half plane with itself by
\[\left(\begin{matrix}\alpha & \beta \\  \gamma & \delta \end{matrix}\right)(z_1,z_2)\mapsto \left(\frac{\alpha z_1+\beta}{\gamma z_1 + \delta}, \frac{\alpha' z_2+ \beta'}{\gamma'z_2+\delta'} \right).\] 
In the special case $\afr=\Oo$ we get SL$_2(\mathcal{O},\Oo)=$SL$_2(\mathcal{O})$.\\
The quotient $X^{\mathfrak{a}}=\mathbb{H}^2/ SL_2(\mathcal{O},\mathfrak{a})$ is a non-compact complex surface with finitely many singularities (namely $h(p)$ many, where $h(\cdot)$ denotes the class number of positive definite primitive binary integral quadratic forms with discriminant $p$) which can be compactified
by adding the cusps to $X^{\mathfrak{a}}$ and resolving the singularities created. Then one gets the Hirzebruch compactification $\bar{X^{\mathfrak{a}}}=X^{\mathfrak{a}} \cup \bigcup_k S_k$ where the $S_k$ are rational curves.\\

\noindent\textbf{Definition:} A skew-hermitian matrix 
\begin{equation*}
 B=\left(\begin{matrix} a\sqrt{D} & \lambda \\
            -{\lambda}^{'} & \frac{b}{A} \sqrt{D}
        
       \end{matrix}\right)
\end{equation*}
is called $\mathfrak{a}-$integral if $a$ and $b$ are integrals and $\lambda \in \mathfrak{a}^{-1}$, where $\lambda'$ is the conjugate of $\lambda$.\\
If there is no integer $n>1$ with $(\frac{a}{n},\frac{b}{n}, \frac{\lambda}{n})\in \mathbb{Z}^2\times \mathfrak{a}^{-1}$, then $ B$ is called primitive.\\

\noindent\textbf{Definition:} For a primitive, $\mathfrak{a}-$integral, skew-hermitian matrix $B$ the curve $F_B$ is defined as the image of the set 
\begin{equation*}
 \left\{ (z_1,z_2) \in \mathbb{H}^2\cup \mathbb{P}^1(K): (z_2, 1) B \left(\begin{matrix} 
                                                     z_1 \\
                                                     1
                                                    \end{matrix}
\right)=0\right\}
\end{equation*}
 in $X^{\mathfrak{a}}$. With Norm($\mathfrak{a}$)$=A$ the curve $F_N$ is defined as 

\begin{equation*}
 F_N:= \bigcup_{\begin{matrix} 
                                                     B \textnormal{ as above } \\
                                                     \det(B)=\frac{N}{A}
                                                    \end{matrix}
} F_B.
\end{equation*}

\noindent Franke showed in \cite{SS_HF78} that for a prime discriminant $p$ and $p^2 \nmid N$ the curve $ F_N$ consists of only one component.\\
The Hirzebruch-Zagier curve $T_N$ is then defined as 
\begin{equation*}
 T_N = \bigcup_{\begin{matrix} 
                                                     t \geq 1 \\
                                                     t^2|N
                                                    \end{matrix}}F_{\frac{N}{t^2}},
\end{equation*}
so for $N$ squarefree one gets $T_N=F_N$ irreducible. Furthermore, $F_N$ is not empty if $\chi_p(NA) \neq -1$, where $\chi_p(n)=\left(\frac{n}{p}\right) $ is the Legendre symbol, and compact 
if $N$ is not the norm of an ideal in the genus of $\mathfrak{a}$.\\

For the self-intersection number of the curves $T_N$ Hirzebruch and Zagier showed the following formula:\\

\begin{thm} (Hirzebruch-Zagier \cite{SS_HZ76}, Theorem 4):
\begin{equation*}
 {T_N^2}= \frac{1}{2} \sum_{n|N}n \left(H_p\left(\frac{N^2}{n^2}\right)+I_p\left(\frac{N^2}{n^2}\right)\right)\left(\chi_{p}(n)+\chi_{p}\left(\frac{NA}{n}\right)\right),
\end{equation*}
with
\begin{equation*}H_p(n)=\sum_{\substack{x \in \mathbb{Z}\\x^2 \leq 4n\\ x^2 \equiv 4n \textnormal{ mod } p}}H \left(\frac{4n-x^2}{p}\right),
\end{equation*}
\begin{equation*}
 H(n)=\left\{\begin{array}{cc}-\frac{1}{12} & \textnormal{ if } n=0 \\ \sum_{d^2|n}h'\left(-\frac{n}{d^2}\right) & \textnormal{else}\end{array}\right.,
\end{equation*}
\begin{equation*}
 h'(\Delta)= \left\{\begin{array}{cc}\frac{1}{3} & \textnormal{ if } \Delta=-3\\
                     \frac{1}{2} & \textnormal{ if } \Delta=-4\\
                     h(\Delta) & \textnormal{ if } \Delta \equiv 0 \textnormal{ or } 1 \textnormal{ mod } 4, \Delta \leq -4 
                    
\end{array}\right.,
\end{equation*}
where $h(\Delta)$ is defined as the class number of positive definite primitive binary integral quadratic forms with discriminant $\Delta$ and
\begin{equation*}
 I_p(n)=\frac{1}{\sqrt{p}} \sum_{\substack{\lambda \in \mathcal{O}\\ \lambda>0, \lambda'>0\\ \lambda \lambda'=n }}\min(\lambda,\lambda').
\end{equation*}\end{thm}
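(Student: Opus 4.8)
The plan is to compute $T_N^2$ as an intersection number on the compactified surface $\bar{X^{\mathfrak{a}}}$ and to split it into a contribution coming from the interior $X^{\mathfrak{a}}$, which will produce the class-number term $H_p$, and a contribution supported near the cusps of the Hirzebruch resolution, which will produce the term $I_p$. First I would reduce to the basic curves $F_B$: since $T_N=\bigcup_{t^2\mid N}F_{N/t^2}$, it suffices to understand the pairing $F_M\cdot F_N$ of the curves attached to skew-hermitian matrices of prescribed determinant, and then to recover $T_N^2$ by summing with the appropriate multiplicities dictated by the factorization of $N$.

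For the interior contribution I would analyze when a point $(z_1,z_2)\in\Ha^2$ lies on two curves $F_{B_1},F_{B_2}$ defined by $\mathfrak{a}$-integral skew-hermitian matrices $B_1,B_2$ with $\det B_i$ fixed. Such a point forces a two-dimensional space of skew-hermitian forms to vanish on $(z_1,z_2)$, and the $\det$-pairing restricted to $\Z B_1+\Z B_2$ endows it with a positive definite binary integral quadratic form whose discriminant is governed by $\det B_1$, $\det B_2$ and an off-diagonal ``trace'' parameter $x$. Counting $SL_2(\mathcal{O},\mathfrak{a})$-orbits of such configurations should then yield exactly a sum of Hurwitz class numbers $H\!\left(\frac{4n-x^2}{p}\right)$ over integers $x$ with $x^2\le 4n$ and $x^2\equiv 4n \bmod p$, i.e. the quantity $H_p(n)$; the congruence modulo $p$ reflects $\mathfrak{a}$-integrality and the arithmetic of $K=\Q(\sqrt p)$. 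The weight $n$ and the divisor sum over $n\mid N$ come from decomposing the determinant condition $\det B=N/A$ along the factorization of $N$, the factor $\chi_p(n)+\chi_p(NA/n)$ encodes the splitting behaviour of the relevant primes together with the genus condition on $\mathfrak{a}$, and the global $\tfrac12$ absorbs the symmetry $n\leftrightarrow N/n$ and the sign ambiguity $B\leftrightarrow -B$.

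For the self-intersection proper (the degenerate case $M=N$), transversality fails, so instead of merely counting points I would compute the degree of the normal bundle of $F_N$. Here the non-compactness of $X^{\mathfrak{a}}$ is essential: after adjoining the cusps and resolving to obtain $\bar{X^{\mathfrak{a}}}=X^{\mathfrak{a}}\cup\bigcup_k S_k$, the curve $F_N$ meets the exceptional configuration $\bigcup_k S_k$, and the local intersection data at the cusps should contribute the Eisenstein-type term $I_p(n)=\frac{1}{\sqrt p}\sum_{\lambda\lambda'=n,\ \lambda,\lambda'>0}\min(\lambda,\lambda')$, the totally positive $\lambda\in\mathcal{O}$ being precisely the directions in which $F_N$ runs into a cusp. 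The cleanest way to assemble the two pieces is to invoke modularity: one shows that $\sum_N (T_M\cdot T_N)\,q^N$ is a modular form of weight $2$, compares it with the explicit Eisenstein series whose Fourier coefficients are built from the class numbers and from $I_p$, and reads off the stated formula for the coefficient $T_N^2$.

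I expect the main obstacle to be the self-intersection and cusp analysis rather than the interior count. Computing the normal-bundle degree forces a careful local study at the resolved cusp singularities $S_k$, and it is this local computation that must isolate the $I_p$ term; keeping track simultaneously of multiplicities at the finitely many quotient singularities (the $h(p)$ points) and along the cusp curves, while matching the whole generating series to a weight-$2$ modular form, is the delicate part. By contrast, the interior count reduces to a standard reduction-theory argument for binary quadratic forms, so it should be routine once the orbit parametrization of intersecting pairs $(B_1,B_2)$ is set up correctly.
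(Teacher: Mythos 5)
There is nothing in the paper to compare your argument against: the statement you are proving is Theorem 4 of Hirzebruch--Zagier \cite{SS_HZ76}, quoted verbatim and used as a black box; the paper offers no proof of it. So the only question is whether your sketch would stand on its own as a reconstruction of the original argument, and as written it would not. Your outline does identify the correct global architecture of the Hirzebruch--Zagier computation --- transversal interior intersections counted via the determinant form restricted to the lattice $\Z B_1+\Z B_2$ of skew-hermitian matrices (yielding the class numbers $H\left(\frac{4n-x^2}{p}\right)$ and the congruence $x^2\equiv 4n \bmod p$), the normal-bundle/self-contact degeneration producing the $H(0)=-\frac{1}{12}$ terms, and the cusp resolution producing $I_p$. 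But every one of these steps is asserted rather than carried out (``should then yield exactly,'' ``should contribute''), and these are precisely the places where the content of the theorem lives: the orbit count that converts configurations $(B_1,B_2)$ into Hurwitz class numbers, the proportionality computation for the normal bundle of $F_N$ in the interior, and the explicit local analysis along the cycles $S_k$ at the resolved cusps that isolates $\frac{1}{\sqrt p}\sum_{\lambda\lambda'=n}\min(\lambda,\lambda')$. A proof must supply these computations; a plan to do them is not a proof.

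The one step that is actually wrong in conception, rather than merely deferred, is your proposed endgame. You suggest establishing first that $\sum_N (T_M\cdot T_N)q^N$ is a weight-$2$ modular form and then ``reading off'' the coefficient $T_N^2$ by comparison with an explicit Eisenstein series. In the Hirzebruch--Zagier paper the logic runs in the opposite direction: the intersection numbers are computed in closed form by the geometric arguments above, and modularity of the generating series is \emph{deduced} by recognizing the resulting expression as the coefficients of a known form. To run your version you would need an independent proof of modularity (of Kudla--Millson or Borcherds type, which did not exist in 1976 and which you do not supply) \emph{and} you would still need to pin down which weight-$2$ form occurs, which in practice requires computing enough of the very intersection numbers you are trying to avoid computing. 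As stated, this step is circular, and the burden of the proof falls back on the local computations you have postponed.
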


\noindent We use this formula to first show that there exists only finitely many curves $T_N$ with negative self-intersection and then use this to find a bound $b(X^{\mathfrak{a}})$ s.t. $T_N^2\geq -b(X^{\mathfrak{a}})$ for $\chi_p(A)=1$ with norm$({\mathfrak{a}})=A$ .\\

\noindent Let $N=\prod_{i=1}^k p_i$ with $p_1,..,p_k$ (pairwise different, if irreducible is wanted) prime numbers with $\chi_p(p_i)=1$, $\sigma_1(n):=\sum_{d|n}d$. As in \cite{SS_HZ76} we write $H_p(n)=-\frac{1}{6}n+H_p^0(n)$, where $H_p^0(n)=\sum_{\substack{x \in \mathbb{Z}\\x^2 < 4n\\ x^2 \equiv 4n \textnormal{ mod } p}}H \left(\frac{4n-x^2}{p}\right)$ has only summands bigger or equal to zero.\\
Since we look at a sum over class numbers, we need an estimate for those numbers, but Payley showed in \cite{Payley} combined with Littlewood \cite{Littlewood} that for class numbers of a negative discriminant we have $h(-d)\geq \frac{\pi}{24\textnormal{e}^{\gamma}}\frac{\sqrt{d}}{\log\log(d)}=:\tilde{h}(-d)$, where $\gamma:=\lim_{n \rightarrow \infty}\sum_{k=1}^n(\frac{1}{k}-\log n)$ is the Euler-Mascheroni constant. We use this estimate to get a better grip on the positive part $H_p^0(n^2)$. By defining $H'(n):=\sum_{d^2|n}\tilde{h}\left(-\frac{n}{d^2}\right)$ we get the following
\begin{lemma}\label{Formel}For $N=\prod_{i=1}^k p_i$ with $p_1,..,p_k$ pairwise different prime numbers with $\chi_p(p_i)=1$, we have
\begin{align*} H_p^0\left(n^2\right) 
&\overset{1)}{\geq} \sum_{k=0}^{\lfloor \frac{2n}{p}\rfloor-1} H' \left(\frac{4n^2-\left(\frac{p-1}{2}+kp\right)^2}{p}\right)\\
&\overset{2)}{\geq} \frac{\pi n}{12\textnormal{e}^{\gamma}\sqrt{p}\log\log(4n^2)}\sum_{k=0}^{\lfloor \frac{2n}{p}\rfloor-1}\left(1-\left(\frac{\left(\frac{p-1}{2}\right)^2+kp(p-1)+k^2p^2}{2n^2}\right)\right)\\
&\overset{3)}{\geq} \frac{\pi n}{12\textnormal{e}^{\gamma}\sqrt{p}\log\log(4n^2)}\left(\frac{2n}{3p}-1+\frac{1}{p}\right),
\end{align*}

\end{lemma}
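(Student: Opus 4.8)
The plan is to read the three displayed inequalities as three independent reduction steps and to prove each in turn: step $1)$ passes from the exact sum defining $H_p^0(n^2)$ to a clean one-parameter sum by discarding terms and replacing the class-number weights by the Payley--Littlewood lower bound; step $2)$ inserts the explicit shape of $\tilde h$ and linearises the square root; step $3)$ is a Faulhaber-type evaluation of the resulting polynomial sum in $k$.

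For step $1)$, recall that the summation in $H_p^0(n^2)$ runs over $x\in\Z$ with $x^2<4n^2$ and $x^2\equiv 4n^2 \bmod p$. Since $N=\prod p_i$ with $\chi_p(p_i)=1$ forces $p\nmid n$, the congruence $x^2\equiv 4n^2\bmod p$ has exactly the two solution classes $x\equiv\pm 2n$, say $x\equiv r_0$ and $x\equiv p-r_0$ with $1\le r_0\le\frac{p-1}{2}$. First I would drop the contributions of $x\le 0$ (legitimate since every summand $H(\tfrac{4n^2-x^2}{p})\ge 0$) and bound $H\ge H'$ by invoking, in the principal divisor term, the estimate $h(-d)\ge\tilde h(-d)$ (valid for $d$ large, consistent with the regime $p\to\infty$ of the theorem). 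The key combinatorial point is then an \emph{ordering estimate}: listing the admissible positive $x$ in increasing order as $x^{(0)}<x^{(1)}<\cdots$, the two arithmetic progressions interleave as $x^{(2j)}=jp+r_0$ and $x^{(2j+1)}=(j+1)p-r_0$, so that in the worst case $r_0=\frac{p-1}{2}$ one gets the uniform bound $x^{(k)}\le\frac{p-1}{2}+kp$. A short count shows at least $\lfloor 2n/p\rfloor$ such $x^{(k)}$ lie below $2n$, and since the principal term $\tilde h(-m)$ of $H'(m)$ (the only one that survives into step $2)$) is increasing in $m$ on the relevant range, replacing $x^{(k)}$ by its upper bound $\frac{p-1}{2}+kp$ only decreases each retained summand; this yields the first inequality.

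For step $2)$ I would substitute $\tilde h(-d)=\frac{\pi}{24e^{\gamma}}\frac{\sqrt d}{\log\log d}$ with $d=\frac{4n^2-x^2}{p}$. Because $d<4n^2$ and $\log\log$ is increasing, $\frac{1}{\log\log d}\ge\frac{1}{\log\log(4n^2)}$, which frees the logarithm from the sum; a linear lower bound for the square root, $\sqrt{4n^2-x^2}\ge 2n-\frac{x^2}{n}$ (i.e. $\sqrt{1-u}\ge 1-2u$ on $[0,1]$), then produces exactly the factor $1-\frac{(\frac{p-1}{2})^2+kp(p-1)+k^2p^2}{2n^2}$ after expanding $x^2=(\frac{p-1}{2}+kp)^2$. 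Step $3)$ is then routine: evaluate $\sum_{k=0}^{M-1}1$, $\sum k$, $\sum k^2$ with $M=\lfloor 2n/p\rfloor$ by Faulhaber's formulas. The cubic piece $-\frac{p^2}{2n^2}\cdot\frac{2M^3}{6}$ contributes $-\frac{4n}{3p}$ to leading order, which combines with the $+\frac{2n}{p}$ coming from $\sum 1$ to give the announced $\frac{2n}{3p}$, all remaining lower-order contributions being absorbed into the correction $-1+\frac1p$.

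The main obstacle is step $1)$. The real work is not the displayed replacement of $H$ by $H'$ but the ordering estimate and the monotonicity argument that licenses substituting $\frac{p-1}{2}+kp$ for the true $x^{(k)}$, together with the bookkeeping that guarantees at least $\lfloor 2n/p\rfloor$ admissible arguments and that each resulting $d=\frac{4n^2-x^2}{p}$ is a genuine discriminant large enough for $\tilde h$ (equivalently $\log\log d>0$) and for Payley--Littlewood to apply. The terms with $x$ very close to $2n$, where $d$ is small and these hypotheses could fail, are few and may be discarded without affecting the stated bound in the regime $p\to\infty$.
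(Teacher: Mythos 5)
Your proposal follows essentially the same route as the paper: the same three-step reduction, producing at least $\lfloor 2n/p\rfloor$ solutions of $x^2\equiv 4n^2 \bmod p$ of the form $x_0+kp$ with $x_0\le\frac{p-1}{2}$, then inserting the Payley--Littlewood bound with $\log\log d\le\log\log(4n^2)$ and $\sqrt{1-u}\ge 1-2u$, and finally summing the polynomial in $k$. Your treatment of step $1)$ (the explicit ordering of the two residue classes, the monotonicity of $\tilde h$ justifying the substitution of the upper bound for $x^{(k)}$, and the caveat about small discriminants where the Payley bound degenerates) is in fact more careful than the paper's, which leaves these points implicit.
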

\begin{proof}
At first we want to write 
\begin{equation*}H_p^0(n)=\sum_{\substack{x \in \mathbb{Z}\\x^2 < 4n\\ x^2 \equiv 4n \textnormal{ mod } p}}H \left(\frac{4n-x^2}{p}\right)\end{equation*}
 in a more attainable way, namely we want to get rid of the $x$ in the formula and formulate it instead with the help of the known number $p$. But we know that

\begin{equation*}
 \frac{4n^2-x^2}{p}\geq \frac{4n^2-(\frac{p-1}{2})^2}{p},
\end{equation*}
because there exists a solution $x$ smaller than $\frac{p-1}{2}$ for the equation $x \in \mathbb{Z}, x^2 < 4n^2,  x^2 \equiv 4n \textnormal{ mod } p$.
Furthermore, one gets at least $\lfloor \frac{2n}{p}\rfloor$ solutions for the above equation, namely if $x$ is a solution, then the numbers $x+kp$ for $1 \leq k \leq \lfloor \frac{2n}{p}\rfloor-1$ are also solutions and so we get the wanted inequality by inserting them into the original formula. \\
Now we use the first inequality and Payley's formula for the resulting $d=\frac{4n^2-\left(\frac{p-1}{2}+kp\right)^2}{p}$ to get
\begin{align*}h\left(-\frac{4n^2-\left(\frac{p-1}{2}+kp\right)^2}{p}\right)
&\geq \frac{\pi}{24\textnormal{e}^{\gamma}}\frac{\sqrt{\frac{4n^2-((p-1)/2)^2+kp(p-1)+k^2p^2}{p}}}{\log(\log(4n^2))}\\ 
&\geq \frac{\pi}{24\textnormal{e}^{\gamma}}\frac{\sqrt{4n^2(1-\frac{((p-1)/2)^2+kp(p-1)+k^2p^2}{4n^2})}}{\sqrt{p}\log(\log(4n^2))}\\
&=\frac{\pi n}{12\textnormal{e}^{\gamma}}\frac{\sqrt{(1-\frac{((p-1)/2)^2+kp(p-1)+k^2p^2}{4n^2})}}{\sqrt{p}\log(\log(4n^2))}.
\end{align*}
To get rid of the square root we use the known inequality $\sqrt{1-x}\geq 1-2x$ for $0 \leq x \leq 1$. Then we get that the above term is bigger or equal to 

\begin{align*}
\frac{\pi n}{12\textnormal{e}^{\gamma}\sqrt{p}\log\log(4n^2)}\left(1-\left(\frac{\left(\frac{p-1}{2}\right)^2+kp(p-1)+k^2p^2}{2n^2}\right)\right).
\end{align*} and so the inequality 2) is proven.\\
Now part 3) is just an easy calculation of sums, but considering that $\frac{2n}{p}$ is not an integer we get an inequality
\begin{align*}
&\sum_{k=0}^{\lfloor \frac{2n}{p}\rfloor-1}\left(1-\left(\frac{\left(\frac{p-1}{2}\right)^2+kp(p-1)+k^2p^2}{2n^2}\right)\right)\\
&\geq\frac{2n}{p}-\frac{2n}{p}\left(\frac{(p-1)^2}{8n^2}\right)-\frac{p-1}{2n}\left(\frac{2n}{p}-1\right)-\frac{p}{6} \frac{\left(\frac{2n}{p}-1\right)\left(\frac{4n}{p}-1\right)}{n}\\
&=\frac{2n}{3p}-1-\frac{p}{4n}+\frac{1}{2n}-\frac{1}{pn}+\frac{p}{2n}+\frac{1}{p}-\frac{1}{2n}-\frac{p}{6n}\\
&=\frac{2n}{3p}-1+\frac{1}{p}-\frac{1}{pn}+\frac{p}{12n}\\
&\geq \frac{2n}{3p}-1+\frac{1}{p},\end{align*}
since $p \geq 5$ we have that $\frac{p}{12n}$ is bigger than $\frac{1}{pn}$ and so the proof is finished.
\end{proof}
Now we insert this estimate for $H_p^0(n^2)$ into the formula for $T_N^2$ to get:

\begin{lemma}\label{Absch} For $N=\prod_{i=1}^k p_i$ with $p_1,..,p_k$ pairwise different prime numbers with $\chi_p(p_i)=1$, we have
\begin{align*}T_N^2 \geq -\frac{1}{6}cN\log\log(N)+\frac{N\delta}{\sqrt{p}\log\log(4N^2)} \left(\frac{2N}{3p}-1+\frac{1}{p}\right),
\end{align*} with $\delta:=\frac{\pi}{12\textnormal{e}^{\gamma}}$, $c:=\textnormal{e}^{\gamma}+0.6482$.
\end{lemma}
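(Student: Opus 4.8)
The plan is to substitute the Hirzebruch--Zagier formula of Theorem 4 and then run a chain of one-sided estimates in which every non-negative contribution is discarded except the single one producing the stated main term, while the two genuinely negative ingredients --- the divisor sum and the logarithmic factors --- are controlled by Robin's inequality and by Lemma \ref{Formel} respectively.

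First I would collapse the character factor and the term $I_p$. Since each $p_i$ satisfies $\chi_p(p_i)=1$, every divisor $n\mid N$ has $\chi_p(n)=1$, and because we work under $\chi_p(A)=1$ we also get $\chi_p(NA/n)=\chi_p(N/n)\chi_p(A)=1$. Hence $\chi_p(n)+\chi_p(NA/n)=2$ for every $n\mid N$, which cancels the prefactor $\tfrac12$. The quantity $I_p(N^2/n^2)$ is a sum of positive terms divided by $\sqrt{p}$, so it is non-negative and may be dropped without increasing the right-hand side. Theorem 4 thus reduces to
\[
T_N^2 \geq \sum_{n\mid N} n\, H_p\!\left(\frac{N^2}{n^2}\right).
\]

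Next I would separate the negative contribution. For each $n\mid N$ the argument $N^2/n^2$ is a perfect square, so the two boundary indices $x=\pm 2N/n$ each contribute $H(0)=-\tfrac1{12}$; isolating them gives $H_p(N^2/n^2)=-\tfrac16+H_p^0(N^2/n^2)$ with $H_p^0\geq 0$. Summation turns the constants into $\sum_{n\mid N} n\cdot(-\tfrac16)=-\tfrac16\sigma_1(N)$, and in the remaining non-negative sum I keep only the index $n=1$, whose term is exactly $H_p^0(N^2)$, discarding the rest. This yields
\[
T_N^2 \geq -\tfrac16\sigma_1(N) + H_p^0(N^2).
\]
To the first summand I apply Robin's unconditional estimate $\sigma_1(N)\leq N\bigl(\textnormal{e}^{\gamma}\log\log N+\tfrac{0.6482}{\log\log N}\bigr)$ and, on the range where $\log\log N\geq 1$, replace $\tfrac{1}{\log\log N}$ by $\log\log N$ to obtain $\sigma_1(N)\leq cN\log\log N$ with $c=\textnormal{e}^{\gamma}+0.6482$, hence $-\tfrac16\sigma_1(N)\geq -\tfrac16 cN\log\log N$. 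To the second summand I apply Lemma \ref{Formel} with $n=N$, which (using $\delta=\tfrac{\pi}{12\textnormal{e}^{\gamma}}$) gives precisely $H_p^0(N^2)\geq \tfrac{N\delta}{\sqrt{p}\,\log\log(4N^2)}\bigl(\tfrac{2N}{3p}-1+\tfrac1p\bigr)$. Adding the two estimates proves the claim.

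The main obstacle is the handling of the logarithmic factors: Robin's bound produces a $1/\log\log N$ term and Lemma \ref{Formel} a $1/\log\log(4N^2)$ factor, and neither can be folded into a clean expression in $N$ and $p$ without restricting to a range of $N$ (essentially $N$ beyond $\textnormal{e}^{\textnormal{e}}$) on which $\log\log N\geq 1$. The delicate points are therefore the justification of the replacement $1/\log\log N\leq\log\log N$ and the pinning down of the admissible constant $0.6482$ in Robin's estimate, to which the final bound is sensitive; by contrast, the reduction of the positive part to its single $n=1$ term is immediate from $H_p^0\geq 0$ and requires no further work.
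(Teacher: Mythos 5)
Your proposal is correct and follows essentially the same route as the paper: cancel the $\tfrac12$ against the character sum (using $\chi_p(p_i)=1$ and $\chi_p(A)=1$), drop $I_p\geq 0$, split off $-\tfrac16\sigma_1(N)$ from the boundary contributions $H(0)=-\tfrac1{12}$, retain the single divisor term $H_p^0(N^2)$ (the paper's ``summand for $n=N$'' after its reindexing), and finish with Lemma \ref{Formel} and Robin's estimate. Your write-up is in fact marginally more careful than the paper's, since you discard the remaining non-negative $H_p^0$ terms before invoking Lemma \ref{Formel} (avoiding the issue that the lemma's lower bound can be negative for small $N/n$) and you state explicitly the restriction $\log\log N\geq 1$ needed to absorb Robin's $1/\log\log N$ term into $cN\log\log N$, which the paper leaves implicit.
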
 
\begin{proof}
\begin{align*}T_N^2&=\frac{1}{2} \sum_{n|N}n \left(H_p\left(\frac{N^2}{n^2}\right)\right)\\
&= \sum_{n|N}n H_p^0\left(\left(\frac{N}{n}\right)^2\right)-\frac{1}{6} \sum_{n|N}n\\
&=-\frac{1}{6}\sigma_1(N)+\sum_{n|N}n\sum_{\substack{x \in \mathbb{Z}\\x^2 < 4(N/n)^2\\ x^2 \equiv 4(N/n)^2 \textnormal{ mod } p}}H \left(\frac{4(N/n)^2-x^2}{p}\right)\\
&\overset{\textnormal{Lemma \ref{Formel}}}{\geq} -\frac{1}{6}\sigma_1(N)+\frac{\pi N}{12\textnormal{e}^{\gamma}\sqrt{p}\log\log(4N^2)}\cdot \\ & \sum_{n|N, n\neq 1}\left(\frac{2n}{3p}-1+\frac{1}{p}\right)\\
&\geq -\frac{1}{6}\sigma_1(N)+\frac{\delta N}{6\sqrt{p}\log\log(4N^2)} \left(\frac{2N}{3p}-1+\frac{1}{p}\right),
\end{align*}
where we only consider the summand for $n=N$.\\
Furthermore, we need an estimate for $\sigma_1(N)$, but Robin showed in \cite{Robin} that for $N \geq 3$ we have $\sigma_1(N)<\textnormal{e}^{\gamma}N\log\log(N)+0.6482\frac{N}{\log\log(N)}\leq (\textnormal{e}^{\gamma}+0.6482)N\log\log(N)$.
\end{proof}
Now we can use this formula to show that for $N$ big enough the self-intersection number $T_N^2$ will always be positive.

\begin{lemma}\label{big0}
The minimum of $T_N^2$ is obtained for $N \leq p^k, k \geq \frac{3}{2(1-\eps)}$ for some $\eps$ depending on $p$ ($\eps$ can be chosen as $\frac{\log(\log(\log(p)))}{\log(p)}$).
\end{lemma}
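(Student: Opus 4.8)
The plan is to show that the lower bound established in Lemma~\ref{Absch} is already strictly positive once $N$ exceeds $p^k$ with $k$ as claimed; since $T_N^2$ is negative for suitable small $N$, its minimum must then be attained in the range $N\leq p^k$, and no value with $T_N^2>0$ can compete for the minimum. Recalling the bound
\begin{equation*}
T_N^2 \geq -\frac{1}{6}cN\log\log(N)+\frac{N\delta}{\sqrt{p}\log\log(4N^2)}\left(\frac{2N}{3p}-1+\frac{1}{p}\right),
\end{equation*}
I first observe that for large $N$ the bracket on the right is dominated by its leading term $\frac{2N}{3p}$, so the positive contribution grows like $\frac{2\delta N^2}{3p^{3/2}\log\log(4N^2)}$, i.e. quadratically in $N$, whereas the negative contribution $\frac{c}{6}N\log\log(N)$ grows only linearly up to a doubly-logarithmic factor. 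Hence the positive term must eventually dominate, and the whole task reduces to locating the crossover.

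To make this precise I would require the positive term to exceed the negative one; discarding the lower-order pieces $-1+\frac{1}{p}$ (which only help) and dividing by $N$, this amounts to
\begin{equation*}
\frac{2\delta N}{3p^{3/2}} \geq \frac{c}{6}\,\log\log(N)\,\log\log(4N^2),
\end{equation*}
that is $N \geq \frac{c}{4\delta}\,p^{3/2}\,\log\log(N)\,\log\log(4N^2)$. The remaining difficulty is the doubly-logarithmic factor on the right, which obstructs a clean power-of-$p$ threshold. I would dispose of it using the elementary fact that double logarithms grow slower than any positive power: for a suitable small $\eps>0$ one has $\log\log(N)\,\log\log(4N^2)\leq N^{\eps}$ for $N$ large. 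Substituting this turns the requirement into $N^{1-\eps}\geq \frac{c}{4\delta}p^{3/2}$, and absorbing the fixed constant $\frac{c}{4\delta}$ into the power of $p$ (valid as $p\to\infty$) yields $N\geq p^{3/(2(1-\eps))}$, which is exactly the claimed threshold $N\leq p^k$ with $k\geq\frac{3}{2(1-\eps)}$.

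Finally, I would pin down the admissible $\eps$. Evaluating the double-log product at the crossover $N\approx p^{3/2}$ gives $\log\log N\approx\log\log p$, so the condition $\log\log(N)\log\log(4N^2)\leq N^{\eps}$ becomes $2\log\log\log p \lesssim \eps\cdot\frac{3}{2}\log p$, which is satisfied, up to the absorption of bounded constants, by $\eps=\frac{\log\log\log p}{\log p}$. The main obstacle is thus entirely the bookkeeping around the doubly-logarithmic terms: one must check that a single $\eps$ simultaneously controls both log-log factors and the constant $\frac{c}{4\delta}$ throughout the whole range $N\geq p^k$, not merely at the single point $N=p^{3/2}$, and verify that the discarded terms $-1+\frac{1}{p}$ genuinely do not change the sign near the crossover.
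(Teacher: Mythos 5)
Your proposal is correct and follows essentially the same route as the paper: start from the lower bound of Lemma~\ref{Absch}, replace the doubly-logarithmic factors by a small power $N^{\eps}$, set $N=p^{k}$ and compare exponents, with $\eps\sim\frac{\log\log\log p}{\log p}$. The only (cosmetic) difference is bookkeeping: you absorb both log-log factors into a single $N^{\eps}$ and land exactly on the stated threshold $k\geq\frac{3}{2(1-\eps)}$, whereas the paper bounds $\log\log N\leq N^{\eps}$ and $\log\log(4N^{2})^{-1}\geq N^{-2\eps}$ separately and actually derives $k\geq\frac{3}{2(1-3\eps)}$.
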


\begin{proof}First, we know that there exists $\varepsilon > 0 $ with $\frac{1}{\log\log N} \geq N^{-\varepsilon}$ and $\varepsilon \rightarrow 0$ as $N \rightarrow \infty$.\\ \\
From Lemma \ref{Absch} we know \begin{align*}T_N^2 \geq -\frac{1}{6}cN\log\log(N)+\frac{N\delta}{\sqrt{p}\log\log(4N^2)} \left(\frac{2N}{3p}-1+\frac{1}{p}\right),
\end{align*}
inserting the estimate for $\log\log(N)$ we get
\begin{align*}T_N^2 \geq -\frac{1}{6}cN^{1+\eps}+\frac{N^{1-2\eps}\delta}{\sqrt{p}} \left(\frac{2N}{3p}-1+\frac{1}{p}\right).
\end{align*}
The leading term of the positive right side is \begin{align*}\frac{N^{1-2\eps}\delta}{\sqrt{p}}\frac{2N}{3p} ,\end{align*} so we know that the minimum has be to obtained before the exponent of $p$ of that term is bigger than the exponent of the negative term. \\
Putting $N=p^k$ and looking at the exponent we get
\begin{align*}
 k(1+\eps)&\leq k(2-2\eps)-3/2\\
 \Leftrightarrow & \frac{3}{2}\leq 2k-2\eps k - k - k\eps\\
\Leftrightarrow & \frac{3}{2} \leq k(1-3\eps)\\
\Leftrightarrow & \frac{3}{2(1-3\eps)}\leq k.
\end{align*}
\end{proof}
\begin{remark}For $\eps=\frac{1}{10}$ we have that $\frac{1}{\log\log N} \geq N^{-\varepsilon}$ is true for $N \geq 2$, so $T_{p^{\frac{15}{7}}} \geq 0$ for all $p$.
\end{remark}
\begin{lemma}For $N=\prod_{i=1}^k p_i$ with $p_1,..,p_k$ pairwise different prime numbers with $\chi_p(p_i)=1$, we have for $p \rightarrow \infty$
\begin{align*}T_N^2\geq -\frac{1}{96}\frac{c^2}{\delta}p^{\frac{3}{2}}.
\end{align*}
\end{lemma}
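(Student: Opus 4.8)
The plan is to turn the task into a one-variable extremum problem for the explicit lower bound already isolated in the proofs of Lemmata~\ref{Absch} and~\ref{big0}, and then extract its leading term as $p\to\infty$. In the course of proving Lemma~\ref{big0} we obtained, after the substitutions $\log\log N\le N^{\eps}$ and $\tfrac{1}{\log\log(4N^2)}\ge N^{-2\eps}$, the inequality
\begin{equation*}
T_N^2\ \ge\ g(N):=-\frac{c}{6}N^{1+\eps}+\frac{\delta}{\sqrt{p}}\,N^{1-2\eps}\left(\frac{2N}{3p}-1+\frac{1}{p}\right),
\end{equation*}
valid for all admissible $N$, where $\eps=\tfrac{\log\log\log p}{\log p}\to 0$ as $p\to\infty$. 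By Lemma~\ref{big0} the minimum of $T_N^2$ is attained for $N$ confined to a bounded range about $p^{3/2}$, so it suffices to bound $\min_{N>0}g(N)$ from below, now treating $N$ as a continuous variable.

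First I would isolate the two terms of $g$ that dominate at the scale $N\sim p^{3/2}$, namely the quadratic term $\frac{2\delta}{3p^{3/2}}N^{2-2\eps}$ and the negative term $-\frac{c}{6}N^{1+\eps}$; the two leftover pieces $-\frac{\delta}{\sqrt p}N^{1-2\eps}$ and $\frac{\delta}{p^{3/2}}N^{1-2\eps}$ are smaller by a factor $O(p^{-1/2})$ at $N\sim p^{3/2}$ and are therefore of lower order. Solving $g'(N)=0$ for the dominant part gives the critical-point relation $\frac{2\delta(2-2\eps)}{3p^{3/2}}N^{1-2\eps}=\frac{c(1+\eps)}{6}N^{\eps}$, hence
\begin{equation*}
N^{\ast}=\left(\frac{c(1+\eps)}{4\delta(2-2\eps)}\,p^{3/2}\right)^{\frac{1}{1-3\eps}}.
\end{equation*}
Substituting this relation back into $g$ collapses the value to $g(N^{\ast})=\frac{c}{6}\,\frac{3\eps-1}{2-2\eps}\,(N^{\ast})^{1+\eps}$, because at the critical point the quadratic term equals $\frac{c(1+\eps)}{6(2-2\eps)}(N^{\ast})^{1+\eps}$.

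Letting $\eps\to 0$ (that is, $p\to\infty$) then finishes the computation: the prefactor $\frac{3\eps-1}{2-2\eps}\to-\frac12$ and the exponent $\frac{1}{1-3\eps}\to 1$, so that $N^{\ast}\to\frac{c}{8\delta}p^{3/2}$ and
\begin{equation*}
g(N^{\ast})\ \longrightarrow\ -\frac{c}{12}\cdot\frac{c}{8\delta}\,p^{3/2}=-\frac{1}{96}\,\frac{c^{2}}{\delta}\,p^{3/2},
\end{equation*}
which is exactly the asserted bound. It remains to note that $N^{\ast}$, being a fixed constant times $p^{3/2}$, falls inside the admissible range $N\le p^{3/(2(1-3\eps))}=p^{3/2}(\log\log p)^{O(1)}$ of Lemma~\ref{big0} for all large $p$, so that $T_N^2\ge g(N)\ge g(N^{\ast})$ holds for every admissible $N$.

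The main obstacle, and the only place that demands genuine care, is this passage to the limit. The factors $N^{\pm\eps}$ evaluated at $N\sim p^{3/2}$ equal $(\log\log p)^{\pm O(1)}$ rather than $1$, and the two discarded terms must be shown to perturb $g(N^{\ast})$ only below the $p^{3/2}$ scale; one therefore has to verify that these residual logarithmic factors and lower-order terms do not disturb the leading coefficient $\frac{1}{96}\frac{c^2}{\delta}$ in the stated $p\to\infty$ asymptotic. Everything else reduces to the elementary vertex computation for an upward-opening parabola.
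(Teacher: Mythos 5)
Your proposal is correct and follows essentially the same route as the paper: both take the lower bound of Lemma~\ref{Absch}, use Lemma~\ref{big0} to confine $N$ to a range where the $\log\log$ factors can be controlled, minimize the resulting one-variable function at its vertex $N\approx\frac{c}{8\delta}p^{3/2}$, and let $p\to\infty$ to extract the constant $\frac{1}{96}\frac{c^2}{\delta}$. The only difference is cosmetic --- the paper freezes $\log\log N$ at its maximal value $p^{k\eps}$ on the admissible range so that $t(N)$ is an exact quadratic, while you carry $N^{\pm\eps}$ through the differentiation --- and the residual $(\log\log p)^{O(1)}$ factor you rightly flag as the delicate point is equally present in the paper's own leading term $p^{\frac{3}{2}}p^{4k\eps}$, where it is dismissed only by the remark that $4k\eps\to 0$.
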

\begin{proof}We know by Lemma \ref{big0}  that the minimum has to be obtained for a $N$ smaller than $p^k$, so we know that for the intervall where the minimum is obtained $\log\log(N)\leq p^{k\eps}$ is true.\\
Therefore we can replace $\log\log(N)$ by $p^{k\eps}$ in the formula and get
\begin{align*}
T_N^2 \geq -\frac{1}{6}cNp^{k\eps}+\frac{N\delta}{\sqrt{p}p^{2k\eps}}  \left(\frac{2N}{3p}-1+\frac{1}{p}\right)=:t(N).
\end{align*}
To get the minimum of the self-intersection numbers we differentiate $t(N)$ and get
\begin{align*}t'(N)=-\frac{1}{6}cp^{k\eps}+\frac{\delta}{\sqrt{p}p^{2k\eps}}  \left(\frac{4N}{3p}-1+\frac{1}{p}\right).
\end{align*}
We have that this is equal to $0$ for 
\begin{align*}N_{\textnormal{min}}=\frac{1}{8}\frac{c}{\delta}p^{\frac{3}{2}}p^{3k\eps}-\frac{3}{4}(p-1).
\end{align*}
Since the second derivative of $t(N)$ is always positive, we have found our minimum.
Inserting this into the estimate for $T_N^2$, we get that
\begin{align*}
T_N^2 \geq  \left(\frac{1}{96}\frac{c^2}{\delta}\right)  p^{\frac{3}{2}}p^{4k\eps}-(p-1)cp^{k\eps}-\frac{3}{4}(p-1)-\frac{1}{8}cp^{\eps}(p-1)+\frac{3\delta}{8p^{3/2+2k\eps}}(p-1)^2
\end{align*}
For the asymptotical behaviour as $p\rightarrow \infty$ we therefore get that 
\begin{align*}\lim_{p\rightarrow \infty}t(N_{\textnormal{min}})=\left(\frac{1}{96}\frac{c^2}{\delta}\right)  p^{\frac{3}{2}}p^{4k\eps} 
\end{align*} where ${4k\eps} \rightarrow 0$ as $p$ tends to $\infty$.\\

\end{proof}

\begin{remark}If the Riemann Hypothesis is true we can take $\delta:=\frac{\pi}{6\textnormal{e}^{\gamma}}$ for all $p$ and $c:=\textnormal{e}^{\gamma}$ for all $N \geq 5041$. 
\end{remark}
\newpage

 \section{Bounded negativity for non-quaternionic Hilbert modular surfaces}

In this part I will follow closely the proof of Theorem 1 in \cite{SS_7}.\\
First, as in the proof of Theorem 1 we use the following theorem by Miyaoka (\cite{Mia})
\begin{thm}(Theorem 2.2. in \cite{SS_7})\label{alphathm} Let $X$ be a surface of nonnegative Kodaira dimension, let $C$ be an irreducible curve of geometric genus $g$ on $X$ and $K_X$ the canonical divisor. Then
\begin{align*}\frac{\alpha^2}{2}(C^2+3C\cdot K_X-6g+6)-2\alpha(C\cdot K_X-3g+3)+3c_2-K_X^2\geq 0
\end{align*}
$\forall \alpha \in [0,1]$.
\end{thm}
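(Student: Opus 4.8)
The plan is to recognize this as a logarithmic (orbifold) Bogomolov--Miyaoka--Yau inequality for the pair $(X,\alpha C)$, with $\alpha$ playing the role of the orbifold weight. The base case $\alpha=0$ is exactly the classical inequality $K_X^2\le 3c_2(X)$, which holds whenever $X$ has nonnegative Kodaira dimension (it is the Miyaoka--Yau inequality on the minimal model, and $3c_2-K_X^2$ only increases under blow-ups, changing by $+4$ each time). So I would try to produce the whole family of inequalities, quadratic in $\alpha$, from a single orbifold estimate $\bar c_1^2\le 3\bar c_2$ for $(X,\alpha C)$, and then simply expand.

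First I would pass to the normalization $\nu\colon\tilde C\to C$, so that $\tilde C$ is smooth of genus $g$ with $e(\tilde C)=2-2g$, and record the delta-invariant $\delta:=p_a(C)-g$ measuring the singularities of $C$; adjunction gives $C^2+C\cdot K_X=2p_a(C)-2=2g-2+2\delta$. I would then attach to $(X,\alpha C)$ the orbifold first Chern number $\bar c_1^2=(K_X+\alpha C)^2$ and the orbifold second Chern number (orbifold Euler number)
\[
\bar c_2 \;=\; c_2(X)-\alpha\,e(\tilde C)+\alpha^2\delta \;=\; c_2(X)-\alpha(2-2g)+\alpha^2\delta,
\]
the term $\alpha^2\delta$ being the contribution of the singular points, computed after resolving them to normal crossings; it is the analogue of the cross terms $\beta_i\beta_j\,D_i\cdot D_j$ in the orbifold Euler number of a normal crossing divisor.

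The heart of the argument is the inequality $\bar c_1^2\le 3\bar c_2$. For rational weights $\alpha=1-1/m$ this follows by applying the Miyaoka--Yau inequality to the index-$m$ orbifold ramified along $C$ (equivalently, to a suitable cyclic cover, or via the semistability of the orbifold cotangent sheaf), the nonnegativity of the Kodaira dimension of $X$ keeping the orbifold canonical class in the range where the inequality is available. Since such weights are dense in $[0,1]$ and both $\bar c_1^2$ and $\bar c_2$ are polynomial in $\alpha$, the inequality extends to every real $\alpha\in[0,1]$ by continuity. Substituting the two Chern numbers, $3\bar c_2-\bar c_1^2\ge 0$ becomes
\[
(3c_2-K_X^2)-2\alpha\,(C\cdot K_X-3g+3)+\alpha^2\bigl(3\delta-C^2\bigr)\ge 0,
\]
and replacing $3\delta-C^2=\tfrac12(C^2+3C\cdot K_X-6g+6)$ via the adjunction identity above turns this into precisely the claimed quadratic.

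The main obstacle is the orbifold BMY inequality itself, in two respects. First, one must justify it for all $\alpha\in[0,1]$ and not merely on a minimal surface of general type: this needs either an orbifold Kähler--Einstein metric on $(X,\alpha C)$ (as in the open/orbifold versions of the Miyaoka--Yau theory) or Miyaoka's algebraic route through the (semi)stability of the orbifold cotangent sheaf, together with Bogomolov's inequality sharpened to the constant $3$. Second, the bookkeeping at the singular points of $C$ --- isolating exactly the correction $\alpha^2\delta$ and checking its sign --- is delicate; the cleanest way is to blow up the singularities to a log smooth model, compute the orbifold Euler number there from the standard normal crossing formula, and verify that the exceptional contributions assemble into $\alpha^2\delta$. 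Once these are in place, the remaining expansion is routine.
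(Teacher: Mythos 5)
The paper does not prove this statement at all: it is imported verbatim as Theorem~2.2 of \cite{SS_7}, which in turn derives it from Miyaoka's orbibundle Miyaoka--Yau--Sakai inequality \cite{Mia} by exactly the substitution you describe (weight $\alpha$ along $C$, orbifold Euler number $c_2(X)-\alpha(2-2g)+\alpha^2\delta$, and the adjunction identity $3\delta-C^2=\tfrac12(C^2+3C\cdot K_X-6g+6)$), and your expansion checks out. So your proposal follows essentially the same route as the cited source, with the one genuinely hard ingredient --- the orbifold BMY inequality for all $\alpha\in[0,1]$ --- left as a black box, which is precisely how the paper itself treats it.
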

Let $C$ be a curve on a Hilbert modular surface $X$ of genus $g$. Then again as in \cite{SS_7} we define the difference 
\begin{align}\label{delta}\delta:= p-g=\frac{1}{2}(K_X\cdot C+C^2-2g+2),
\end{align} where $K_X$ is the canonical divisor of $X$ and $p$ is the arithmetic genus of $C$. $\delta$ is always positive.

\begin{lemma}
For a Shimura curve $C$ on $X$ we have 
\begin{align*}C^2\geq -9d_2,\end{align*}
where $d_2:=3c_2-K_X^2$
\end{lemma}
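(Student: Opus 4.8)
The plan is to feed the two quantities attached to $C$, namely the self-intersection $C^2$ and the genus defect $\delta$ defined in \eqref{delta}, into Miyaoka's inequality (Theorem \ref{alphathm}) and then optimize over the free parameter $\alpha\in[0,1]$. First I would use the definition $\delta=\frac{1}{2}(K_X\cdot C+C^2-2g+2)$ to eliminate the awkward combinations $C\cdot K_X-3g+3$ and $C^2+3C\cdot K_X-6g+6$ that appear in Miyaoka's expression, rewriting them purely in terms of $C^2$, $\delta$, and $K_X\cdot C$. Concretely, $C\cdot K_X - 3g + 3 = (K_X\cdot C + C^2 - 2g + 2) - C^2 + (-g+1) = 2\delta - C^2 - (g-1)$, and similarly the quadratic bracket $C^2 + 3C\cdot K_X - 6g + 6$ should collapse to something controlled by $\delta$ and $C^2$. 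The point of these substitutions is that Shimura curves sit on surfaces where the genus and the intersection with $K_X$ are tied together, so after substitution the inequality should become a quadratic in $\alpha$ whose coefficients involve only $C^2$, $\delta$, and $d_2=3c_2-K_X^2$.

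Once the inequality reads $A\alpha^2 - 2B\alpha + d_2 \geq 0$ for all $\alpha\in[0,1]$, with $A$ and $B$ expressed through $C^2$ and $\delta$, I would exploit that this must hold in particular at the optimal interior point $\alpha^* = B/A$ (provided $\alpha^*\in[0,1]$), giving $d_2 \geq B^2/A$, equivalently $A d_2 \geq B^2$. Solving this relation for $C^2$ is where the bound $C^2\geq -9d_2$ should emerge: the constant $9$ strongly suggests that after the substitution one lands on an estimate of the shape $(C^2)^2$ or $(C^2+\text{something})^2$ bounded by a multiple of $d_2$ and that the worst case forces the factor $9$. I would track the coefficient $3$ appearing throughout Miyaoka's formula carefully, since $9=3^2$ is almost certainly its source.

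The main obstacle I anticipate is twofold. First, the endpoint analysis: the optimum $\alpha^*=B/A$ need not lie in $[0,1]$, so I must separately handle the cases $\alpha^*>1$ (where the binding constraint is at $\alpha=1$) and $\alpha^*<0$, and verify that in each regime the bound $C^2\geq -9d_2$ still follows — likely using that $\delta>0$ and that $d_2\geq 0$ for a surface of nonnegative Kodaira dimension. Second, I would need the positivity input specific to Shimura curves that controls the sign of $\delta$ and keeps $K_X\cdot C$ and $g$ aligned; this is exactly the arithmetic ingredient that distinguishes Shimura curves from arbitrary curves and lets the otherwise-loose Miyaoka bound sharpen to the clean linear estimate in $d_2$. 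Assembling these cases so that a single inequality $C^2\geq -9d_2$ holds uniformly is the delicate part; the algebraic manipulation in the interior case is routine by comparison.
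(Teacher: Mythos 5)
Your skeleton --- plug the definition of $\delta$ into Miyaoka's inequality, view it as a quadratic $P(\alpha)\geq 0$, and extract a bound from the discriminant condition at the optimal $\alpha$ --- is indeed the paper's starting point, and your computation of the leading coefficient is right: the quadratic bracket collapses to $6\delta-2C^2$, giving $\alpha^2(3\delta-C^2)$. But there is a genuine gap at the next step. After substituting $\delta$, the linear coefficient is $C\cdot K_X+3C^2-6\delta$; it still contains $C\cdot K_X$ (equivalently, as in your own rewriting, the genus $g$) as an independent unknown, and no amount of optimizing over $\alpha$ will produce a bound on $C^2$ in terms of $d_2$ alone while that term is present. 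The "positivity input specific to Shimura curves" that you correctly sense is needed but do not supply is the Hirzebruch--H\"ofer proportionality relation, adapted to the non-compact case:
\begin{align*}
(K_X+S)\cdot C+2C^2+\rho(C)=4\delta,
\end{align*}
with $S$ the boundary divisor and $\rho(C)=2(\deg S_C-S_X\cdot C)$. This is what eliminates $C\cdot K_X$ and turns the linear coefficient into $C^2-S\cdot C-\rho(C)-2\delta$, so that $P(\alpha_0)\geq 0$ becomes an inequality in $C^2$, $\delta$, $d_2$, and the nonnegative boundary terms only. Without it the argument cannot close.

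Your guess about the source of the constant $9$ is also off, and this matters for how the proof ends. It does not come from squaring the $3$ in Miyaoka's formula. After the proportionality substitution one gets $C^2\geq 2\delta+S\cdot C+\rho(C)-2d_2-2\sqrt{d_2^2+\delta d_2+d_2S\cdot C+d_2\rho(C)}$; the boundary terms are absorbed by minimizing $f(x)=x-2\sqrt{d_2x}$ at $x=d_2$ (costing $-d_2$ each), leaving $C^2\geq 2\delta-4d_2-2\sqrt{d_2^2+\delta d_2}$. One then checks this is nonnegative for $\delta\geq\frac{5+\sqrt{13}}{2}d_2$ and, in the complementary range, bounds it below by $\left(-4-2\sqrt{\tfrac{7+\sqrt{13}}{2}}\right)d_2\geq -9d_2$. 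So the case split is not on the location of $\alpha^*$ in $[0,1]$, as you anticipated, but on the size of $\delta$ relative to $d_2$ (and on the sign of $C^2-S\cdot C-2\delta$). These are the concrete steps your proposal would need to be completed.
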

\begin{proof}
\noindent As in Theorem 3.5. in \cite{SS_7} we use the polynomial $P(\al)$ which is obtained by solving for g from (\ref{delta}) and inserting it into the equation of Theorem \ref{alphathm}
\begin{align*}
 P(\al)=\al^2(3\delta-C^2)+\al(CK_X+3C^2-6\delta)+d_2\geq 0,
\end{align*}
where $d_2:=3c_2-K_X^2$.\\
With $\rho(C)=2\left(\deg S_C-S_X\cdot C\right)$, where $S_X=\bar{X}\setminus X$ is a strict normal crossing divisor and $S_{C}=C \cap S_{X}$ is the boundary divisor we get the Hirzebruch-Höfer proportionality theorem $(K_X+S)C+2C^2+\rho(C)=4\delta$ by adjusting the proof in [9] to the non compact case. Therefore this becomes
\begin{align*}
 P(\al)=\al^2(3\delta-C^2)+\al(C^2-S\cdot C-\rho(C)-2\delta)+d_2\geq 0.
\end{align*}
the minimum of $P(\al)$ is attained for
\begin{align*}
 \alpha_0:=\frac{2\delta +S\cdot C+\rho(C)-C^2}{2(3\delta -C^2)}.
\end{align*}
Evaluating the condition $P(\al_0)\geq 0$ we get
\begin{align*}
 2d_2+2\sqrt{d_2^2+\delta d_2+d_2 S\cdot C}&\geq 2\delta+S C+\rho(C)-C^2\\
&\geq 2d_2-2\sqrt{d_2^2+\delta d_2+d_2 S\cdot C+d_2\rho(C)}.
\end{align*}
If $C^2-S\cdot C\geq 2\delta$, then $C^2\geq 0$, but 
for $C^2-S\cdot C<2\delta $ we get the lower bound
\begin{align}
 C^2&\geq 2\delta+S\cdot C+\rho(C)-2d_2-2\sqrt{d_2^2+\delta d_2+d_2 S\cdot C+d_2\rho(C)}\notag \\ &\geq 2\delta+S\cdot C+\rho(C)-2d_2-2\sqrt{d_2^2+\delta d_2}-2\sqrt{d_2 S\cdot C}-2\sqrt{d_2 \rho(C)},
\end{align}
where we used the triangle inequality in the last step.\\
\noindent For $f(x)=x-2\sqrt{d_2x}$ the minimum is achieved at $x=d_2$, because the differential is
\begin{align*}
 f'(x)=1-\sqrt{\frac{d_2}{x}},
\end{align*}
and $f(d_2)=-d_2$. Since we have this function twice, once with $x=S\cdot C$ and once with $x=\rho(C)$, the lower bound of $C^2$ by (2) becomes
\begin{align*}
 C^2\geq 2\delta-4d_2-2\sqrt{d_2^2+\delta d_2}.
\end{align*}
We can calculate that this is bigger or equal to zero for $\delta \geq \frac{5+\sqrt{13}}{2}d_2$.\\

For $\delta < \frac{5+\sqrt{13}}{2}d_2$ we have $C^2<0$ and 
\begin{align*}
 C^2\geq 2\delta-4d_2-2\sqrt{d_2^2+\delta d_2}.
\end{align*}
Since $-2\sqrt{d_2^2+\delta d_2}>-2\sqrt{d_2^2+\frac{5+\sqrt{13}}{2}d_2^2}$, we get
\begin{align*}
 C^2 \geq 2\delta-4d_2-2\sqrt{\frac{7+\sqrt{13}}{2}d_2^2},
\end{align*}
therefore we get the wanted result $C^2\geq \left(-4-2\sqrt{\frac{7+\sqrt{13}}{2}}\right)d_2 \geq -9d_2$.\end{proof}

To understand the asymptotical behaviour of $d_2(X^{\mathfrak{a}})$ we look at $c_2(X^{\mathfrak{a}})$. We know that $c_2(\bar{X^{\mathfrak{a}}})=$vol($X^{\mathfrak{a}}$)$+l(X^{\mathfrak{a}})$,
where $l(X^{\mathfrak{a}})$  comes from the cusps, and vol($X^{\mathfrak{a}})=[SL_2(\mathcal{O}):SL_2(\mathcal{O},\mathfrak{a})]2\zeta_K(-1)$ with $\zeta_K(-1)$ the 
Dedekind zeta-function. For $K$ a real quadratic field with discriminant $p$
\begin{equation*}
 \zeta_K(-1)=\frac{1}{60} \sum_{x \in \mathbb{Z}}\sigma_1\left(\frac{p-x^2}{4}\right),
\end{equation*}
where $\sigma_1(x)=0$ if $x \notin \mathbb{Z}_{\geq 1}$ and $\sigma_1(x)=\sum_{d|x}d$ if $x \in \mathbb{Z}_{\geq 1}$.\\

For $\sigma_0(x)$ which is defined as $\sigma_0(x)=0$ if $x \notin \mathbb{Z}_{\geq 1}$ and $\sigma_1(x)=\sum_{d|x}1$ if $x \in \mathbb{Z}_{\geq 1}$ we know by Lemma 5.3 in \cite{{SS_GvdG88}}:
\begin{equation*}\sum_{x \in \mathbb{Z}}\sigma_0\left(\frac{p-x^2}{4}\right)\leq p^{\frac{1}{2}}\left(\frac{3}{2\pi^2}\log^2(p)+1.05\log p\right)
\end{equation*}
for $p \equiv 1 $ mod $4$.\\
So we get 
\begin{align*} \sum_{x \in \mathbb{Z}}\sigma_1\left(\frac{p-x^2}{4}\right)&\leq \sum_{x \in \mathbb{Z}}p\sigma_0\left(\frac{p-x^2}{4}\right)\\
&\leq  p^{\frac{3}{2}}\left(\frac{3}{2\pi^2}\log^2(p)+1.05\log p\right).\end{align*}
The part coming from the cusps, namely $l(X^{\mathfrak{a}})$, splits into the part coming from the curves in the resolution of the cusps and the part coming from the quotient singularities.
For the number of curves in the resolution of the cusps, we know by chapter 4 in \cite{{SS_GvdG88}} that this is equal to
\begin{equation*}\frac{1}{2}\sum_{x \in \mathbb{Z}}\sigma_0\left(\frac{p-x^2}{4}\right)\leq \frac{1}{2}p^{\frac{1}{2}}\left(\frac{3}{2\pi^2}\log^2(p)+1.05\log p\right).
\end{equation*}
For the part of the quotient singularities we know by chapter 7 in \cite{{SS_GvdG88}} that for $p>500$ it is equal to $\frac{3}{2}a_2+\frac{5}{3}a_3^++\frac{8}{3}a_3^-$, where $a_2, a_3^+, a_3^-$ are the numbers of quotient singularities of type $(2;1,1), (3;1,1)$ and $(3;1,-1)$. By chapter 1 in \cite{{SS_GvdG88}} we know that $a_2=h(-4p), a_3^+\geq 4h\left(\frac{-p}{3}\right)$ and $a_3^-\geq \frac{1}{2}h(-3p)$, where $h(\cdot)$ is the class number. With Payley's inequality we then get that 
\begin{align*}a_2&\geq \frac{\pi}{12\textnormal{e}^{\gamma}}\frac{\sqrt{p}}{\log\log(4p)}\\
a_3^+&\geq \frac{\sqrt{3}\pi}{6\textnormal{e}^{\gamma}}\frac{\sqrt{p}}{\log\log(3p)}\\
a_3^-&\geq \frac{\sqrt{3}\pi}{48\textnormal{e}^{\gamma}}\frac{\sqrt{p}}{\log\log(3p)}.
\end{align*}
Therefore we have
\begin{align*}C^2 &\geq -\left(\frac{9}{10}p^{\frac{3}{2}}\left(\frac{3}{2\pi^2}\log^2(p)+1.05\log p\right)+ \frac{27}{2}p^{\frac{1}{2}}\left(\frac{3}{2\pi^2}\log^2(p)+1.05\log p\right)\right.\\
&+\left.\frac{27\pi}{8\textnormal{e}^{\gamma}}\frac{p^{\frac{1}{2}}}{\log\log(4p)}+\frac{3\cdot 5\sqrt{3}\pi}{2\textnormal{e}^{\gamma}}\frac{p^{\frac{1}{2}}}{\log\log(3p)}+\frac{3 \cdot \sqrt{3}\pi}{2\textnormal{e}^{\gamma}}\frac{p^{\frac{1}{2}}}{\log\log(3p)}\right).
\end{align*}

\Addresses

\end{document}